\documentclass[11pt]{amsart}
\usepackage{amssymb}
\textwidth 15.1cm \textheight 21.08cm \topmargin 0.0cm
\oddsidemargin 0.0cm \evensidemargin 0.0cm
\parskip -0.0cm

\def\Li{\hbox{  Li}}

\newtheorem{lemme}{Lemma}
\newtheorem{coro}{Corollary}
\newtheorem{theorem}{Theorem}

%%%%%%%%%%%%%%%%%%%%%%%%%%%%%%%%%%

\begin{document}

\title [logarithmically completely monotonic functions]{ A Class of logarithmically completely monotonic functions relating the $q$-gamma function and applications\\}%

\author[K. Mehrez]{ Khaled Mehrez }
 \address{Khaled Mehrez. D\'epartement de Math\'ematiques ISSAT Kasserine, Tunisia.}
 \email{k.mehrez@yahoo.fr}
\begin{abstract}
In this paper, the logarithmically complete monotonicity property for a functions involving $q$-gamma function is investigated for $q\in(0,1).$  As applications of this results, some new inequalities for the $q$-gamma function  are established. Furthermore, let the sequence $r_n$ be defined by
$$n!=\sqrt{2\pi n}(n/e)^n e^{r_n}.$$
We establish new estimates for Stirling's formula remainder $r_n.$ 
\end{abstract}
\maketitle
%%%%%%%%%%%%%%%%%%%%%%%%%%%%%%%%%%%%%%%%%%%%%%%%%%%%%%%
\noindent{ Keywords:}  Completely monotonic functions, Logarithmically completely monotonic functions, $q$-gamma function, Stirling's formula, Inequalities. \\

\noindent Mathematics Subject Classification (2010): 33D05, 26D07, 26A48\\

\section{\textbf{Introduction}}

A real valued function $f$, defined on an interval $I,$ is called completely monotonic, if f has derivatives of all orders and satisfies
\begin{equation}
(-1)^n f^{(n)}(x)\geq 0,\;\;\;n\in\mathbb{N}_0,\;x\in I,
\end{equation}
where  $\mathbb{N}$ the set of all positive integers.

A positive function $f$ is said to be logarithmically completely monotonic on an interval $I$ if its logarithm $\log f$ satisfies
$$(-1)^n \Big(\log f(x)\Big)^{(n)}(x)\geq 0,$$
for all $x\in I$ and $n\in\mathbb{N}.$

Completely monotonic functions have remarkable applications in different
branches of mathematics. For instance, they play a role in potential theory, probability theory, physics, numerical and asymptotic analysis, and combinatorics (see \cite{C1} and the references given therein).

The $q$-analogue of the gamma function is defined as
\begin{equation}
\Gamma_q(x)=(1-q)^{1-x}\prod_{j=0}^{\infty}\frac{1-q^{j+1}}{1-q^{j+x}},\:0<q<1,
\end{equation}
and 
\begin{equation}
\Gamma_q(x)=(q-1)^{1-x}q^{\frac{x(x-1)}{2}}\prod_{j=0}^{\infty}\frac{1-q^{-(j+1)}}{1-q^{-(j+x)}},\:q>1.
\end{equation}
The  $q-$gamma  function  $\Gamma_q(z)$  has  the  following  basic 
properties:
\begin{equation}
\lim_{q\longrightarrow1^{-}}\Gamma_{q}(z)=\lim_{q\longrightarrow1^{+}}\Gamma_{q}(z)=\Gamma(z),
\end{equation}
and 
\begin{equation}\label{369}
\Gamma_{q}(z)=q^{\frac{(x-1)(x-2)}{2}}\Gamma_{\frac{1}{q}}(z).
\end{equation}

The $q-$digamma function $\psi_q,$ the  $q-$analogue  of  the  psi  or  digamma  function $\psi$ is 
defined for $0<q<1$ by 
\begin{equation}\label{ttt}
\begin{split}
\psi_q(x)&=\frac{\Gamma^{'}_q(x)}{\Gamma_q(z)}\\
&=-\log(1-q)+\log q \sum_{k=0}^{\infty}\frac{q^{k+x}}{1-q^{k+x}}\\
&=-\log(1-q)+\log q \sum_{k=1}^{\infty}\frac{q^{kx}}{1-q^{k}}.
\end{split}
\end{equation}
For $q>1$ and $x>0$, the $q-$digamma function $\psi_q$ is defined by 
\begin{equation*}
\begin{split}
\psi_q(x)&=-\log(q-1)+\log q\left[x-\frac{1}{2}-\sum_{k=0}^{\infty}\frac{q^{-(k+x)}}{1-q^{-(k+x)}}\right]\\
&=-\log(q-1)+\log q\left[x-\frac{1}{2}-\sum_{k=1}^{\infty}\frac{q^{-kx}}{1-q^{-kx}}\right]
\end{split}
\end{equation*}
From the previous definitions, for a positive $x$ and $q>1$, we get 
$$\psi_q(x)=\frac{2x-3}{2}\log q+\psi_{1/q}(x).$$

Using the Euler-Maclaurin formula, Moak \cite{M} obtained the following q-analogue of Stirling formula 
\begin{equation}
\log \Gamma_q(x)\sim \left(x-\frac{1}{2}\right)\log\left(\frac{1-q^x}{1-q}\right)+\frac{\Li_2(1-q^x)}{\log q}+\frac{1}{2}H(q-1)\log q+C_{\hat{q}}+\sum_{k=1}^{\infty}\frac{B_{2k}}{(2k)!}\left(\frac{\log \hat{q}}{\hat{q}^{x} -1}\right)^{2k-1} \hat{q}^{x} P_{2k-3}(\hat{q}^{x}) 
\end{equation}
as $x\longrightarrow\infty$ where $H(.)$ denotes the Heaviside step function, $B_k,\;k=1,2,...$ are the Bernoulli numbers,
$$\hat{q}=\begin{cases}
q & if\,\,0<q<1\\
1/q & if\: q>1\end{cases}$$
$\Li_2(z)$ is the dilogarithm function defined for complex argument $z$ as \cite{abra}
\begin{equation}
 \Li_{2}(z)=-\int_{0}^{z}\frac{\log(1-t)}{t}dt,\;z\notin (0,\infty)
 \end{equation} 
$P_k$ is a polynomial of degree $k$ satisfying
\begin{equation}
P_k(z)=(z-z^2)P^{'}_{k-1}(z)+(kz+1)P_{k-1}(z),\;P_0=P_{-1}=1,\; k=1,2,... 
\end{equation}
and 
$$C_k=\frac{1}{2}\log (2\pi)+\frac{1}{2}\log\left(\frac{q-1}{\log q}\right)-\frac{1}{24}\log q+\log\left(\sum_{m=-\infty}^{\infty}r^{m(6m+1)}-r^{(2m+1)(3m+1)}\right),$$
where $r=\exp(4\pi^2/\log q).$ It is easy to see that
$$\lim_{q\longrightarrow 1}C_q=\frac{1}{2}\log(2\pi),\;\;\textrm{and}\;\;\lim_{q\longrightarrow 1}\frac{\Li_2(1-q^x)}{\log q}=-x.$$

Stirling's formula
\begin{equation}
n!\sim\sqrt{2\pi n}\left(\frac{n}{e}\right)^n, \;n\in\mathbb{N}
\end{equation}
has many applications in statistical physics, probability theory and number theory. Actually, it was first discovered in 1733
by the French mathematician Abraham de Moivre (1667-1754) in the form
$$n!\sim constant \sqrt{n}\left(\frac{n}{e}\right)^n$$
when he was studying the Gaussian distribution and the central limit theorem. Afterwards, the Scottish mathematician
James Stirling (1692-1770) found the missing constant $\sqrt{2\pi}$ when he was trying to give the normal approximation of the
binomial distribution.

In 1940 Hummel \cite{H} defined the sequence $r_n$ by
\begin{equation}
n!=\sqrt{2\pi n}(n/e)^n e^{r_n},
\end{equation}
and established
\begin{equation}\label{333}
\frac{11}{2}<r_n+\log \sqrt{2\pi}<1
\end{equation}
After the inequality (\ref{333}) was published, many improvements have been given. For example, Robbins \cite{R} established
\begin{equation}
\frac{1}{12n+1}<r_n<\frac{1}{12n}
\end{equation}

The main aim of this paper is to investigate the logarithmic complete monotonicity property
of the function
\begin{equation}\label{001}
f_{\alpha,\beta}(x;q)=\frac{\Gamma_{q}(x+\beta)\exp\left(\frac{-\Li_{2}(1-q^{x})}{\log q}\right)}{\left(\frac{1-q^x}{1-q}\right)^{x+\beta-\alpha}},\;x>0, \end{equation}
for all reals $\alpha, \beta$ and $q$ such that $q\in(0,1).$ As applications of these results, sharp bounds for the $q$-gamma function are derived. In addition, we present new estimate for for Stirling's formula remainder $r_n,$ (see Corollary \ref{c4}). Some results are shown to be a generalization of results which were obtained by Chen and Qi \cite{qi}.

\section{\textbf{logarithmically completely monotonic function related the $q$-gamma function}}
In order to study the function defined by (\ref{001}) we need the following lemma which
is considered the main tool to arrive at our results.
\begin{lemme}\label{salem2}\cite{salem}
For every $x,q\in\mathbb{R}_+$, there exists at least one real number $a \in [0, 1]$ such that 
\begin{equation}
\psi_q(x)=\log\left(\frac{1-q^{x+a}}{1-q}\right)+\frac{q^{x}\,\log q}{1-q^{x}}-\left(\frac{1}{2}-a\right)H(q-1)\log q
\end{equation}
where $H(.)$ is the Heaviside step function.
\end{lemme}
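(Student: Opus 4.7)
The plan is to combine the $q$-functional equation for $\psi_q$ with a double application of the classical mean value theorem to $\log\Gamma_q$. Differentiating $\log\Gamma_q(x+1) = \log\frac{1-q^x}{1-q} + \log\Gamma_q(x)$, which comes from $\Gamma_q(x+1) = \frac{1-q^x}{1-q}\Gamma_q(x)$, gives the recurrence
\[
\psi_q(x+1) - \psi_q(x) = -\frac{q^x\log q}{1-q^x}.
\]
Plugging this into the identity to be proved, the lemma is equivalent to the existence of $a \in [0,1]$ with
\[
\psi_q(x+1) = \log\frac{1-q^{x+a}}{1-q} - \left(\tfrac{1}{2}-a\right)H(q-1)\log q =: G(a).
\]
A short calculation shows $G'(a) > 0$ on $[0,1]$ in both regimes $0 < q < 1$ and $q > 1$, so by the intermediate value theorem the task reduces to sandwiching $\psi_q(x+1)$ between $G(0)$ and $G(1)$.

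The key enclosure I aim for is
\[
\log\frac{1-q^x}{1-q} \leq \psi_q(x+1) \leq \log\frac{1-q^{x+1}}{1-q}.
\]
This follows from two applications of the mean value theorem to $\log\Gamma_q$: on $[x,x+1]$ there is $\xi_1 \in (x,x+1)$ with $\log\frac{1-q^x}{1-q} = \psi_q(\xi_1)$, and similarly on $[x+1,x+2]$ one gets $\log\frac{1-q^{x+1}}{1-q} = \psi_q(\xi_2)$ for some $\xi_2 \in (x+1,x+2)$. The enclosure then follows from strict monotonicity of $\psi_q$, which is a consequence of the log-convexity of $\Gamma_q$: termwise differentiation of the series (\ref{ttt}) yields
\[
\psi_q'(x) = (\log q)^2 \sum_{k=1}^{\infty}\frac{k q^{kx}}{1-q^k} > 0
\]
for $0 < q < 1$, and the corresponding positivity for $q > 1$ follows at once from the identity $\psi_q(x) = \frac{2x-3}{2}\log q + \psi_{1/q}(x)$.

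Finally, one checks that the enclosure above implies $G(0) \leq \psi_q(x+1) \leq G(1)$. For $0 < q < 1$ the Heaviside term vanishes and $G(0), G(1)$ are precisely the two sides of the enclosure. For $q > 1$ one has $G(0) = \log\frac{1-q^x}{1-q} - \frac{1}{2}\log q$ and $G(1) = \log\frac{1-q^{x+1}}{1-q} + \frac{1}{2}\log q$, which, since $\log q > 0$, strictly widen the mean value bounds, so the sandwich holds a fortiori. In either case the intermediate value theorem delivers the required $a \in [0,1]$. The main obstacle is conceptual rather than computational: recognizing the right reformulation that recasts the identity as $\psi_q(x+1) = G(a)$ for a monotone $G$, after which strict log-convexity of $\Gamma_q$ does the remaining work.
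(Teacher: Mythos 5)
The paper does not actually prove this lemma: it is imported verbatim from the reference \cite{salem} and used as a black box, so there is no in-paper argument to compare yours against. Judged on its own, your proof is correct and complete. The reduction via $\psi_q(x+1)=\psi_q(x)-\frac{q^x\log q}{1-q^x}$ correctly turns the claim into solving $G(a)=\psi_q(x+1)$ with $G(a)=\log\frac{1-q^{x+a}}{1-q}-(\frac12-a)H(q-1)\log q$; your computation that $G'(a)>0$ in both regimes checks out (for $q>1$ the two negative signs in $\frac{-q^{x+a}\log q}{1-q^{x+a}}$ cancel and the Heaviside term contributes $+\log q$); the two mean value theorem applications on $[x,x+1]$ and $[x+1,x+2]$, combined with $\Gamma_q(x+1)=\frac{1-q^x}{1-q}\Gamma_q(x)$ and the strict monotonicity of $\psi_q$ (correctly derived from the series (\ref{ttt}) for $0<q<1$ and transferred to $q>1$ via $\psi_q=\frac{2x-3}{2}\log q+\psi_{1/q}$), give exactly the enclosure $G(0)\le\psi_q(x+1)\le G(1)$ needed for the intermediate value theorem. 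The only blemish is cosmetic: the lemma as stated allows $q=1$, where the formulas for $\psi_q$ degenerate, but that is a defect of the statement rather than of your argument, which covers $0<q<1$ and $q>1$ as intended. Your proof would in fact make the paper more self-contained than it currently is.
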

\begin{theorem}\label{t1} Let $\alpha$ be a real number. The function $f_{\alpha,1}(q;x)$ is logarithmically completely monotonic on $(0,\infty)$, if and only if $2\alpha\leq1.$
\end{theorem}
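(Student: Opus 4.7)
\emph{Plan.} Write $g(x)=\log f_{\alpha,1}(x;q)$. The plan is to differentiate $g$, expand the result as a power series in $q^{x}$, and then reduce logarithmic complete monotonicity to a sign condition on the coefficients. Using $\frac{d}{dx}\Li_{2}(1-q^{x})=x(\log q)^{2}q^{x}/(1-q^{x})$ together with the functional equation $\Gamma_{q}(x+1)=\frac{1-q^{x}}{1-q}\Gamma_{q}(x)$, a short computation gives
$$g'(x)=\psi_{q}(x+1)-\log\!\left(\frac{1-q^{x}}{1-q}\right)+(1-\alpha)\,\frac{q^{x}\log q}{1-q^{x}}.$$

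Plugging in (\ref{ttt}) for $\psi_{q}(x+1)$, together with $-\log(1-q^{x})=\sum_{m\geq 1}q^{mx}/m$ and $q^{x}/(1-q^{x})=\sum_{m\geq 1}q^{mx}$, the $\pm\log(1-q)$ constants cancel and everything collapses into
$$g'(x)=\sum_{m\geq 1}q^{mx}\,h_{\alpha}(m),\qquad h_{\alpha}(m):=\frac{q^{m}\log q}{1-q^{m}}+\frac{1}{m}+(1-\alpha)\log q.$$
Termwise differentiation yields, for every $n\geq 1$,
$$(-1)^{n}g^{(n)}(x)=-|\log q|^{n-1}\sum_{m\geq 1}m^{n-1}q^{mx}h_{\alpha}(m),$$
so $f_{\alpha,1}$ is logarithmically completely monotonic iff $h_{\alpha}(m)\leq 0$ for every $m\geq 1$ (the ``if'' direction is immediate from termwise nonnegativity; the ``only if'' direction follows from the uniqueness of the representation of the CM function $-g'$ as a generalized Dirichlet series).

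For sufficiency I would substitute $u=m|\log q|>0$ to rewrite
$$h_{\alpha}(m)=\frac{1}{m}\left[1-\frac{u}{e^{u}-1}-(1-\alpha)u\right].$$
Under $2\alpha\leq 1$ one has $1-\alpha\geq 1/2$, so it suffices to prove $\frac{u}{e^{u}-1}+\frac{u}{2}\geq 1$ for $u>0$. Using the identity $\frac{u}{e^{u}-1}+\frac{u}{2}=\frac{u}{2}\coth(u/2)$, this rearranges to $\tanh(u/2)\leq u/2$, which is classical.

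The main obstacle I anticipate is the necessity direction, namely extracting the clean bound $2\alpha\leq 1$ from the pointwise conditions $h_{\alpha}(m)\leq 0$. Those conditions are equivalent to $\alpha\leq 1-\tfrac{1}{u}+\tfrac{1}{e^{u}-1}$ with $u=m|\log q|$, and the right-hand side is strictly greater than $1/2$ for each fixed $u>0$ but tends to $1/2$ as $u\to 0^{+}$. To pin down the sharp threshold $1/2$, I expect to either pass to the limit $q\to 1^{-}$ (where one recovers the classical Chen--Qi statement) or isolate $1/2$ as the coefficient of $\log(1-q)$ in the $x\to\infty$ asymptotic of $g(x)$ obtained from Moak's Stirling formula. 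I would spend most of the effort making this final step rigorous.
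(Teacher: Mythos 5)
Your sufficiency argument is correct and is essentially the paper's proof shifted down one derivative: your coefficient $h_\alpha(m)$ satisfies $\Phi_{\alpha,1}(q^m)=(1-q^m)\,m\,h_\alpha(m)$, where $\Phi_{\alpha,1}$ is exactly the function the paper extracts from $(\log f_{\alpha,1})''$, and your inequality $\tanh(u/2)\le u/2$ is equivalent to the paper's verification that the coefficients $\alpha-1+\tfrac1k$ ($k\ge2$) in the expansion (\ref{kh1}) are nonpositive when $2\alpha\le1$. Working at the level of $g'$ does buy you something: you obtain $(-1)^n g^{(n)}\ge0$ for all $n\ge1$ in one stroke, whereas the paper must treat $n=1$ separately by showing $g'$ is increasing with limit $0$ at infinity.

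The genuine gap is the necessity direction, which you only sketch --- and the obstacle you flag there is real, not a technicality that further effort will smooth over. For a \emph{fixed} $q\in(0,1)$ your reduction shows that $f_{\alpha,1}$ is logarithmically completely monotonic if and only if $h_\alpha(m)\le0$ for all $m\ge1$; since $u\mapsto 1-\tfrac1u+\tfrac{1}{e^u-1}$ is increasing (because $\sinh(u/2)>u/2$), this is equivalent to the single condition $\alpha\le 1+\tfrac{1}{\log q}+\tfrac{q}{1-q}$, a threshold strictly larger than $1/2$ (about $0.582$ at $q=e^{-1}$). So for fixed $q$ the ``only if $2\alpha\le1$'' half of the statement is false, and no completion of your final step can produce it; the theorem is correct only if logarithmic complete monotonicity is demanded for every $q\in(0,1)$, in which case letting $q\to1^-$ (so $u\to0^+$) in your exact condition gives $\alpha\le1/2$ immediately. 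Be aware that the paper's own necessity argument has the same defect: it divides (\ref{rr1}) by $\frac{\log q\,q^x}{1-q^x}$ and lets $x\to\infty$ using Moak's approximation (\ref{0rr2}), but the discarded error term $O\!\left(\frac{\log^2q\;q^x}{(1-q^x)^2}\right)$ is of the same order as the retained term $\frac12\frac{\log q\,q^x}{1-q^x}$ in that limit (their ratio tends to $\log q\ne0$), so it does not yield $\alpha\le\tfrac12$ for fixed $q$ --- consistent with your computation. In short: make the quantifier over $q$ explicit and your $u\to0^+$ limit closes the argument cleanly; as written, the necessity half of your proof is missing.
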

\begin{proof}
Taking logarithm of $f_{\alpha, 1} (x;q)$ leads to
\begin{equation}
\log f_{\alpha,1} (x;q)=\log\Gamma_{q}(x+1)-(x+1-\alpha)\log\left(\frac{1-q^{x}}{1-q}\right)-\frac{\Li_{2}(1-q^x)}{\log q}.
\end{equation}
Differentiation yields
\begin{equation}
\left(\log f_{\alpha,1}(x;q)\right)^{'}=\psi_{q}(x+1)-\log\left(\frac{1-q^{x}}{1-q}\right)+\left(1-\alpha\right)\frac{\log q\;q^{x}}{1-q^{x}}.
\end{equation}
From the series expansion 
$$\frac{1}{(1-x)^2}=\sum_{k=0}^{\infty}(k+1)x^k,$$
for $x\in(0,1)$ and (\ref{ttt}) we get
\begin{equation}
\begin{split}
\left(\log f_{\alpha,1}(x;q)\right)^{''}&=\psi_{q}^{'}(x+1)+(1-\alpha)\frac{q^x}{(1-q^x)^2}+\log q\frac{q^x}{1-q^x}\\
&=\big(\log q\big)^2\sum_{k=1}^{\infty}\frac{kq^{k(x+1)}}{1-q^k}+(1-\alpha)\big(\log q\big)^2\sum_{k=1}^{\infty}kq^{kx}+\log q\sum_{k=1}^{\infty} q^{kx}\\
&=\sum_{k=1}^{\infty}\frac{\log q q^{kx}}{1-q^k}\Phi_{\alpha,1}(q^k),
\end{split}
\end{equation}
where
$$\Phi_{\alpha,1}(y)=y\log y+(1-\alpha)(1-y)\log y+(1-y),\; y=q^k,\;k=1,2,...$$
In order to determine the sign of the function $\Phi_{\alpha,1}(y)$, we have
\begin{equation}\label{kh1}
\begin{split}
\Phi_{\alpha,1}(y)&=y\big(-\log(1/y)+(1-\alpha)\log(1/y)(1-1/y)+1/y-1 \big)\\
&=y\sum_{k=2}^{\infty}\frac{(\log(1/y))^k}{(k-1)!}\big[\alpha-1+\frac{1}{k}\big].
\end{split}
\end{equation}
Therefore, the function $\Phi_{\alpha,1}(y)$ is less than zero if $2\alpha\leq1.$ Thus implies that the function $\left(\log f_{\alpha,1}(x;q)\right)^{''}$ is completely monotonic on $(0,\infty).$ This can be rewritten as 
$$(-1)^n\left(\log f_{\alpha,1}(x;q)\right)^{(n)}\geq0,\;n\geq2.$$
In particular, $\left(\log f_{\alpha,1}(x;q)\right)^{''}\geq0,$ so $\left(\log f_{\alpha,1}(x;q)\right)^{'}$ is increasing on $(0,\infty),$ and consequently
\begin{equation*}
\begin{split}
\left(\log f_{\alpha,1}(x;q)\right)^{(1)}&\leq \lim_{x\longrightarrow\infty}\left(\log f_{\alpha,1}(x;q)\right)^{(1)}\\
&=\lim_{x\longrightarrow\infty}\Bigg(\psi_{q}(x+1)-\log\left(\frac{1-q^{x}}{1-q}\right)+\left(1-\alpha\right)\frac{\log q\; q^{x}}{1-q^{x}}\Bigg)\\
&=0.
\end{split}
\end{equation*} 
So $f_{\alpha,1}$ is logarithmically completely monotonic on $(0,\infty)$ if $2\alpha\leq1.$

Conversely, If the function $f_{\alpha,1}(x;q)$ is logarithmically completely monotonic on $(0,\infty)$, then for all real $x>0$, 
 \begin{equation}\label{mp}
 \left(\log f_{\alpha,1}(x;q)\right)^{'}=\psi_q(x+1)-\log\left(\frac{1-q^{x}}{1-q}\right)+\left(1-\alpha\right)\frac{\log q\;q^{x}}{1-q^{x}}\leq 0.
 \end{equation}
From the equation (\ref{mp}) and along with the identity
\begin{equation}
\psi_q(x+1)=\psi_q(x)-\frac{q^x \log q}{1-q^x},
\end{equation}
we have  
$$ \left(\log f_{\alpha,1}(x;q)\right)^{'}=\psi_q(x)-\log\left(\frac{1-q^{x}}{1-q}\right)-\alpha\log q\frac{q^{x}}{1-q^{x}}\leq0,$$
 which is equivalent to
 \begin{equation}\label{rr1}
 \psi_q(x)-\log\left(\frac{1-q^{x}}{1-q}\right)\leq\alpha\frac{\log q\;q^{x}}{1-q^{x}}.
   \end{equation}
   It is worth mentioning that, Moak \cite{M} proved the following approximation for the $q$-digamma function
   $$\psi_q(x)=\log\left(\frac{1-q^x}{1-q}\right)+\frac{1}{2}\frac{\log q\;q^{x}}{1-q^{x}}+O\left(\frac{\log^2 q\;q^{x}}{(1-q^{x})^2}\right)$$
   holds for all $q>0$ and $x>0$ and so $\psi_q(x)\sim I(x;q)$ on $(0,\infty)$ where 
   \begin{equation}\label{0rr2}
   I(x;q)=\log\left(\frac{1-q^x}{1-q}\right)+\frac{1}{2}\frac{\log q\;q^{x}}{1-q^{x}}
   \end{equation}
  Combining (\ref{rr1}) and (\ref{0rr2}) we have
   $$\alpha\leq \frac{1}{2}.$$
   The proof is complete.
   
\end{proof}
\begin{theorem} \label{t2}Let $\alpha$ be a real number. The function $[f_{\alpha,1}(q;x)]^{-1}$ is logarithmically completely monotonic on $(0,\infty)$, if and only if $\alpha\geq1.$
\end{theorem}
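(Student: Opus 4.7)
The plan is to mirror the proof of Theorem~\ref{t1}, reusing its differential computations but reversing the relevant inequalities: positivity of $\Phi_{\alpha,1}$ replaces negativity, and the converse argument is pulled from $x\to\infty$ to $x\to 0^+$.

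For sufficiency, my strategy is to extend the series expansion of $(\log f_{\alpha,1})''$ from the proof of Theorem~\ref{t1} to all higher derivatives: differentiating termwise $n-2$ more times in $x$ yields, for $n\geq 2$,
$$\bigl(\log f_{\alpha,1}(x;q)\bigr)^{(n)}=\sum_{k=1}^{\infty}\frac{k^{n-2}(\log q)^{n-1}\,q^{kx}}{1-q^k}\,\Phi_{\alpha,1}(q^k).$$
The key point is that formula~(\ref{kh1}) realizes $\Phi_{\alpha,1}(y)/y$ as a power series in $\log(1/y)$ with coefficients proportional to $\alpha-1+\tfrac{1}{k}$, so as soon as $\alpha\geq 1$ every bracket is non-negative and hence $\Phi_{\alpha,1}(q^k)\geq 0$ for every $k\geq 1$. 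Reading off the sign $(-1)^{n-1}$ of $(\log q)^{n-1}$ then gives $(-1)^n(-\log f_{\alpha,1})^{(n)}\geq 0$ for all $n\geq 2$. For $n=1$, the positivity of $(\log f_{\alpha,1})'$ will follow from the fact that $(\log f_{\alpha,1})''\leq 0$ combined with $\lim_{x\to\infty}(\log f_{\alpha,1})'(x)=0$, both of which are already available from the proof of Theorem~\ref{t1}.

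For necessity, only the $n=1$ instance $(\log f_{\alpha,1})'(x)\geq 0$ is required. The idea is to multiply by $x>0$ and let $x\to 0^+$: in
$$x\bigl(\log f_{\alpha,1}(x;q)\bigr)'=x\psi_q(x+1)-x\log\left(\frac{1-q^x}{1-q}\right)+(1-\alpha)\,\frac{x\log q\,q^x}{1-q^x},$$
the first term vanishes by continuity of $\psi_q$ at $1$, the second vanishes like $O(x\log x)$ since $1-q^x\sim -x\log q$, and the third converges to $(1-\alpha)(\log q)(-1/\log q)=\alpha-1$. Hence the sign condition forces $\alpha-1\geq 0$.

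The main obstacle is conceptual rather than computational: the Moak asymptotic at $x\to\infty$ used in Theorem~\ref{t1} would only produce $\alpha\geq 1/2$ here, which is strictly weaker than needed. The sharper bound $\alpha\geq 1$ is a boundary-layer phenomenon at the origin, where the term $(1-\alpha)\log q\,q^x/(1-q^x)\sim -(1-\alpha)/x$ dominates $(\log f_{\alpha,1})'(x)$ and forces $1-\alpha\leq 0$; recognizing that one should probe $x\to 0^+$ rather than $x\to\infty$ is the only genuine novelty relative to Theorem~\ref{t1}.
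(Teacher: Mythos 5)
Your proof is correct, and while the sufficiency half coincides with the paper's, the necessity half takes a genuinely different route. For sufficiency both arguments rest on the same two pillars: the sign of $\Phi_{\alpha,1}$ read off from (\ref{kh1}) (every bracket $\alpha-1+\tfrac{1}{k}$ is nonnegative once $\alpha\geq1$, so all derivatives of order $n\geq2$ have the right sign), and the $n=1$ case obtained from monotonicity of $(\log f_{\alpha,1})'$ together with its vanishing limit at infinity; your explicit termwise formula for $(\log f_{\alpha,1})^{(n)}$ is a correct (and slightly more careful) rendering of what the paper leaves implicit. For necessity the paper rewrites the condition as $\alpha\geq \frac{1-q^{x}}{\log q\,q^{x}}\big(\psi_q(x)-\log\frac{1-q^{x}}{1-q}\big)$, substitutes Salem's representation of $\psi_q$ from Lemma \ref{salem2}, and lets $x\to\infty$; you instead multiply $(\log f_{\alpha,1})'(x)\geq0$ by $x$ and let $x\to0^{+}$, where $(1-\alpha)\log q\,q^{x}/(1-q^{x})\sim-(1-\alpha)/x$ dominates the $O(\log x)$ and $O(1)$ terms and forces $\alpha\geq1$. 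Your route buys two things: it is more elementary (no appeal to Lemma \ref{salem2}), and it is actually tighter than the paper's computation, since in the paper's limit the Salem parameter $a=a(x)\in[0,1]$ depends on $x$ and the contribution $\frac{1-q^{x}}{\log q\,q^{x}}\log\frac{1-q^{x+a}}{1-q^{x}}$ is asymptotic to $\frac{1-q^{a}}{\log q}\leq0$, which need not vanish, so the stated conclusion $\alpha\geq1$ does not follow from that limit as written. Your side remark that the $x\to\infty$ asymptotics used in the converse of Theorem \ref{t1} could only yield $\alpha\geq\tfrac{1}{2}$ is also accurate and correctly identifies why the origin, not infinity, is the place to probe.
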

\begin{proof} From (\ref{kh1}), we conclude that the function $\Phi_{\alpha,1}(y)\geq0$ if $\alpha\geq1$ we conclude that 
   $$(-1)^n\Bigg(\log\frac{1}{f_{\alpha,1}(q;x)}\Bigg)^{(n)}\geq0$$
   for all $x>0,\;\alpha\geq1,\;q\in(0,1).$ and $n\geq2.$ So, 
  $$\Bigg(\log\frac{1}{f_{\alpha,1}(q;x)}\Bigg)^{(1)}=\log\left(\frac{1-q^{x}}{1-q}\right)-\psi_q(x)+\alpha\log q\frac{q^{x}}{1-q^{x}},$$
  is increasing, thus 
  \begin{equation*}
  \begin{split}
   \Bigg(\log\frac{1}{f_{\alpha,1}(q;x)}\Bigg)^{(1)} &<\lim_{x\longrightarrow\infty}\Bigg(\log\frac{1}{f_{\alpha,1}(q;x)}\Bigg)^{(1)}\\
    &=\lim_{x\longrightarrow\infty} \Bigg(\log\left(\frac{1-q^{x}}{1-q}\right)-\psi_q(x)+\alpha\log q\frac{q^{x}}{1-q^{x}}\Bigg)\\
    &=0.
    \end{split}
    \end{equation*}
    Hence, For $\alpha\geq1$ and $n\in\mathbb{N},$
    \begin{equation*}
    (-1)^n \Bigg(\log\frac{1}{f_{\alpha,1}(q;x)}\Bigg)^{(n)}\geq0,
    \end{equation*} 
    on $(0,\infty).$
Now, assume that $\frac{1}{f_{\alpha,1}(q;x)}$ is logarithmically completely monotonic on $(0,\infty)$, by definition, this give us that for all $q\in(0,1)$    and $x>0,$
  $$\Bigg(\log\frac{1}{f_{\alpha,1}(q;x)}\Bigg)^{(1)}=\log\left(\frac{1-q^{x}}{1-q}\right)-\psi_q(x)+\alpha\log q\frac{q^{x}}{1-q^{x}}\leq0,$$
 which implies that
 \begin{equation}\label{ll}
 \alpha \geq \frac{1-q^x}{\log q\;q^x}\Bigg(\psi_q(x)-\log\Bigg(\frac{1-q^x}{1-q}\Bigg)\Bigg).
  \end{equation}
In  view of Lemma \ref{salem2} and inequality (\ref{ll}), we see that for all $x>0$ and $q\in(0,1)$ there exists at least one real number $a\in[0,1]$ such that 
\begin{equation*}
\alpha \geq \frac{1-q^x}{\log q\;q^x}\Bigg(\log\Bigg(\frac{1-q^{x+a}}{1-q^x}\Bigg)+\frac{\log q\;q^x}{1-q^x}\Bigg),
\end{equation*}
and consequently
$$\alpha\geq1$$
as $x\longrightarrow\infty.$ This ends the proof.
\end{proof}
\begin{theorem} \label{t3} Let $\alpha$ be a real number and $\beta\geq0.$ Then, the function $f_{\alpha,\beta}(x;q)$ is logarithmically completely monotonic function on $(0,\infty)$ if $2\alpha\leq1\leq\beta.$ 
 \end{theorem}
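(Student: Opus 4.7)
My plan is to imitate the proof of Theorem \ref{t1} step by step and reduce the general-$\beta$ case to the case $\beta=1$ by a simple monotonicity argument on a single auxiliary function of $y\in(0,1)$.

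First I would take $\log f_{\alpha,\beta}(x;q)$, differentiate twice, and expand using the series (\ref{ttt}) for $\psi_q$ together with the identity $q^x/(1-q^x)^2=\sum_{k\geq 1} k q^{kx}$. The derivative of the dilogarithm term produces a piece linear in $x$ that cancels against the linear part of $(x+\beta-\alpha)\log((1-q^x)/(1-q))$, leaving a representation
\[
(\log f_{\alpha,\beta}(x;q))'' = \sum_{k=1}^\infty \frac{\log q\; q^{kx}}{1-q^k}\, \Phi_{\alpha,\beta}(q^k),
\]
with
\[
\Phi_{\alpha,\beta}(y) = y^\beta\log y + (\beta-\alpha)(1-y)\log y + (1-y),
\]
in exact parallel with the formula derived in Theorem \ref{t1} (which is the special case $\beta=1$).

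The main step is to show $\Phi_{\alpha,\beta}(y)\leq 0$ on $(0,1)$ whenever $2\alpha\leq 1\leq\beta$. The key observation is the decomposition
\[
\Phi_{\alpha,\beta}(y)-\Phi_{\alpha,1}(y) = (\log y)\,(y^\beta - \beta y + \beta - 1).
\]
Setting $g(y):=y^\beta-\beta y+\beta-1$, one has $g(1)=0$ and $g'(y)=\beta(y^{\beta-1}-1)\leq 0$ on $(0,1)$ for $\beta\geq 1$, so $g\geq 0$; combined with $\log y<0$ this gives $\Phi_{\alpha,\beta}(y)\leq \Phi_{\alpha,1}(y)$. The inequality $\Phi_{\alpha,1}(y)\leq 0$ for $2\alpha\leq 1$ is already provided by the series expansion (\ref{kh1}) established in the proof of Theorem \ref{t1}. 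I expect this decomposition to be the main obstacle—not because the verification is hard, but because one has to spot that the correction $\Phi_{\alpha,\beta}-\Phi_{\alpha,1}$ factors so cleanly; a direct term-by-term Taylor expansion of $\Phi_{\alpha,\beta}$ along the lines of (\ref{kh1}) does not yield a clean sign argument when $\beta\neq 1$.

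Once $\Phi_{\alpha,\beta}\leq 0$ is secured, the rest of the proof is bookkeeping. Because $\log q<0$ and $1-q^k>0$, every coefficient in the series above is non-negative, and each summand $q^{kx}=e^{-k|\log q|x}$ is completely monotonic, so $(\log f_{\alpha,\beta}(x;q))''$ is a superposition of completely monotonic functions and is itself completely monotonic; this gives $(-1)^n(\log f_{\alpha,\beta}(x;q))^{(n)}\geq 0$ for all $n\geq 2$. For $n=1$ I would repeat the limit argument used in Theorem \ref{t1}: positivity of the second derivative makes $(\log f_{\alpha,\beta}(x;q))'$ increasing, and Moak's asymptotic together with $q^{x+\beta}\to 0$ yields $\psi_q(x+\beta)-\log((1-q^x)/(1-q))\to 0$ and $(\beta-\alpha) q^x\log q/(1-q^x)\to 0$ as $x\to\infty$, so $(\log f_{\alpha,\beta}(x;q))'(x)\to 0$ and is therefore $\leq 0$ throughout $(0,\infty)$. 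This will complete the logarithmic complete monotonicity of $f_{\alpha,\beta}$.
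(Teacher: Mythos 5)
Your proposal is correct, and its overall skeleton (the series representation of $(\log f_{\alpha,\beta}(x;q))''$ in terms of $\Phi_{\alpha,\beta}(q^k)$, the complete monotonicity of the resulting non‑negative superposition of $q^{kx}$, and the limit argument that upgrades the conclusion from $n\geq 2$ to $n=1$) coincides with the paper's proof. Where you genuinely diverge is the key inner step, the sign of
\[
\Phi_{\alpha,\beta}(y)=y^\beta\log y+(\beta-\alpha)(1-y)\log y+(1-y).
\]
The paper does carry out the "direct Taylor expansion" you dismiss: it factors out $y^\beta$ and expands in powers of $\log(1/y)$, obtaining coefficients $\frac{\beta^k-(\beta-1)^k}{k}+(\beta-\alpha)\left[(\beta-1)^{k-1}-\beta^{k-1}\right]$, whose non‑positivity it then gets by citing the Chen--Qi inequality $\beta^k-(\beta-1)^k< k(\beta-\alpha)\left(\beta^{k-1}-(\beta-1)^{k-1}\right)$. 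You instead reduce to the already‑settled case $\beta=1$ via the factorization $\Phi_{\alpha,\beta}(y)-\Phi_{\alpha,1}(y)=(\log y)\left(y^\beta-\beta y+\beta-1\right)$, and the elementary check that $g(y)=y^\beta-\beta y+\beta-1$ is non‑increasing on $(0,1)$ with $g(1)=0$, hence non‑negative there; I verified the factorization and the sign argument, and they are correct. Your route is more self‑contained (no external inequality is imported) and makes transparent exactly where the hypothesis $\beta\geq 1$ enters; the paper's route stays within the same power‑series framework as Theorem \ref{t1} and, through the cited coefficient inequality, controls the sign term by term. Both yield $\Phi_{\alpha,\beta}\leq 0$ under $2\alpha\leq 1\leq\beta$, and the remainder of your argument (sign of $\frac{\log q}{1-q^k}$, complete monotonicity of $e^{kx\log q}$, and the limit $(\log f_{\alpha,\beta})'(x)\to 0$ as $x\to\infty$) is sound.
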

 \begin{proof} 
 Standard calculations lead us to
 \begin{equation*}
\begin{split}
\left(\log f_{\alpha,\beta}(x;q)\right)^{''}&=\psi_{q}^{'}(x+\beta)+(\beta-\alpha)\frac{q^x}{(1-q^x)^2}+\log q\frac{q^x}{1-q^x}\\
&=\big(\log q\big)^2\sum_{k=1}^{\infty}\frac{kq^{k(x+\beta)}}{1-q^k}+(\beta-\alpha)\big(\log q\big)^2\sum_{k=1}^{\infty}kq^{kx}+\log q\sum_{k=1}^{\infty} q^{kx}\\
&=\sum_{k=1}^{\infty}\frac{\log q.q^{kx}}{1-q^k}\Phi_{\alpha,\beta}(q^k),
\end{split}
\end{equation*}
where
$$\Phi_{\alpha,\beta}(y)=y^\beta\log y+(\beta-\alpha)(1-y)\log y+(1-y),\; y=q^k,\;k=1,2,...$$
Thus
$$\Phi_{\alpha,\beta}(y)=y^\beta\left(\sum_{k=2}^{\infty}\frac{\left(\log(1/y)\right)^k}{(k-1)!}\left[\frac{\beta^k-(\beta-1)^k}{k}+(\beta-\alpha)[(\beta-1)^{k-1}-\beta^{k-1}]\right]\right).$$
From the inequality \cite{qi}
$$\beta^k-(\beta-1)^k<k(\beta-\alpha)(\beta^{k-1}-(\beta-1)^{k-1})$$
we conclude that $\Phi_{\alpha,\beta}(y)\leq0.$ This implies that for $n\geq2$ 
\begin{equation}\label{xx}
(-1)^{n}\left(\log f_{\alpha,\beta}(x;q)\right)^{(n)}\geq0
\end{equation}
on $(0,\infty)$ for $2\alpha\leq1\leq\beta.$
As $\left(\log f_{\alpha,\beta}(x;q)\right)^{(2)}\geq0$, it follows that $\left(\log f_{\alpha,\beta}(x;q)\right)^{(1)}$ is increasing on $(0,\infty)$, and consequently
\begin{equation*}
\begin{split}
\left(\log f_{\alpha,\beta}(x;q)\right)^{(1)}&\leq \lim_{x\longrightarrow\infty}\left(\log f_{\alpha,\beta}(x;q)\right)^{(1)}\\
&=\lim_{x\longrightarrow\infty}\Bigg(\psi_{q}(x+\beta)-\log\left(\frac{1-q^{x}}{1-q}\right)+\left(\beta-\alpha\right)\frac{\log q\; q^{x}}{1-q^{x}}\Bigg)\\
&=0.
\end{split}
\end{equation*}
In conclusion, (\ref{xx}) is true also $n=1$, and we conclude that the function $f_{\alpha,\beta}(x;q)$ is logarithmically completely monotonic on $(0,\infty)$ for $2\alpha\leq 1\leq \beta.$  The proof is now completed.  
 \end{proof}
\section{\textbf{Inequalities}}
 As applications of the logarithmic complete monotonicity properties of the function (\ref{001})
which are proved in Theorem \ref{t1}, Theorem \ref{t2} and Theorem \ref{t3}, we can provide the following inequalities for the
$q$-gamma functions.
 
 \begin{coro} Let $q\in(0,1), \;n\in\mathbb{N}$ and $x_k>0\;\;(1\leq k\leq n).$ Suppose that $$\sum_{k=1}^n p_k=1\;\;(p_k\geq0).$$
  If $2\alpha\leq1\leq\beta$, then 
 \begin{equation}\label{111}
 \begin{split}
 \frac{\Gamma_{q}\Big(\sum_{k=1}^{n}p_{k}x_{k}+\beta\Big)}{\prod_{k=1}^{n}\Big[\Gamma_{q}(x_{k}+\beta)\Big]^{p_{k}}}&\leq \frac{\left(\frac{1-q^{\sum_{k=1}^{n}p_{k}x_{k}}}{1-q}\right)^{\sum_{k=1}^{n}p_{k}x_{k}+\beta-\alpha}}{\prod_{k=1}^{n}\left(\frac{1-q^{x_{k}}}{1-q}\right)^{p_{k}(x_{k}+\beta-\alpha)}}\exp\left(\frac{Li_{2}\left(1-q^{\sum_{k=1}^{n}p_{k}x_{k}}\right)-\sum_{k=1}^{n}p_{k}Li_{2}(1-q^{x_{k}})}{\log q}\right)
 \end{split}
 \end{equation}
 \end{coro}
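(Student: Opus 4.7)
The key observation is that Theorem \ref{t3} gives more than is needed here: under the hypothesis $2\alpha\leq 1\leq \beta$, the function $f_{\alpha,\beta}(\,\cdot\,;q)$ is logarithmically completely monotonic on $(0,\infty)$, and in particular $\big(\log f_{\alpha,\beta}(x;q)\big)^{(2)}\geq 0$ for every $x>0$. Hence $x\mapsto \log f_{\alpha,\beta}(x;q)$ is a convex function on $(0,\infty)$. This is the single property of $f_{\alpha,\beta}$ that the corollary will use.

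Given this convexity, the inequality is a one-line application of the discrete Jensen inequality. With nonnegative weights $p_1,\dots,p_n$ satisfying $\sum_{k=1}^n p_k = 1$ and points $x_1,\dots,x_n>0$, I would obtain
\begin{equation*}
\log f_{\alpha,\beta}\!\left(\sum_{k=1}^{n} p_k x_k;\, q\right) \;\leq\; \sum_{k=1}^{n} p_k\, \log f_{\alpha,\beta}(x_k;q),
\end{equation*}
and exponentiating then yields
\begin{equation*}
f_{\alpha,\beta}\!\left(\sum_{k=1}^{n} p_k x_k;\, q\right) \;\leq\; \prod_{k=1}^{n} \big[f_{\alpha,\beta}(x_k;q)\big]^{p_k}.
\end{equation*}

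The remainder of the argument is pure algebraic bookkeeping. I would substitute the explicit form (\ref{001}) of $f_{\alpha,\beta}$ on both sides, isolate the $\Gamma_q$-quotient on the left, and move the $\big((1-q^{x})/(1-q)\big)$-powers together with the $\Li_2$-exponentials to the right; the two dilogarithmic factors $\exp(-\Li_2(1-q^{\sum p_k x_k})/\log q)$ and $\prod_{k}\exp(-p_k\Li_2(1-q^{x_k})/\log q)$ then combine into the single exponential appearing in (\ref{111}). I do not foresee any genuine obstacle: once the log-convexity supplied by Theorem \ref{t3} is in hand, the corollary reduces to Jensen's inequality plus a routine rearrangement of the defining factors of $f_{\alpha,\beta}$.
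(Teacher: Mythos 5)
Your proposal is correct and follows essentially the same route as the paper: both derive log-convexity of $f_{\alpha,\beta}(\cdot;q)$ from the logarithmic complete monotonicity established in Theorem \ref{t3}, apply the discrete Jensen inequality to $\log f_{\alpha,\beta}$, and rearrange the defining factors to obtain (\ref{111}). No substantive difference.
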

 \begin{proof}
 From Theorem \ref{t3}, $f_{\alpha,\beta}(x;q)$ is logarithmically completely monotonic on the interval $(0,\infty),$ which also implies that the function $f_{\alpha,\beta}(x;q)$ is logarithmically convex. Combining this fact with Jensen's inequality for convex functions yields
 \begin{equation}\label{pp}
 \log f_{\alpha,\beta}\left(\sum_{k=1}^{n}p_kx_k;q\right)\leq \sum_{k=1}^{n} p_k\log f_{\alpha,\beta}(x_k;q).
 \end{equation}
 Rearranging (\ref{pp}) can lead to the inequality (\ref{111}).
 \end{proof}
 \begin{coro}
 Let $q\in(0,1), \;n\in\mathbb{N}$ and $x_k>0\;\;(1\leq k\leq n).$ Suppose that $$\sum_{k=1}^n p_k=1\;\;(p_k\geq0).$$
  Then, the following inequalities holds
 \begin{equation}\label{222}
 \begin{split} \frac{\left(\frac{1-q^{\sum_{k=1}^{n}p_{k}x_{k}}}{1-q}\right)^{\sum_{k=1}^{n}p_{k}x_{k}}}{\prod_{k=1}^{n}\left(\frac{1-q^{x_{k}}}{1-q}\right)^{p_{k}x_{k}}}\exp\left(\frac{\Li_{2}\left(1-q^{\sum_{k=1}^{n}p_{k}x_{k}}\right)-\sum_{k=1}^{n}p_{k}\Li_{2}(1-q^{x_{k}})}{\log q}\right)
 \end{split}
 \end{equation}
  $$\leq  \frac{\Gamma_{q}\Big(\sum_{k=1}^{n}p_{k}x_{k}+1\Big)}{\prod_{k=1}^{n}\Big[\Gamma_{q}(x_{k}+1)\Big]^{p_{k}}}$$
  $$\leq \frac{\left(\frac{1-q^{\sum_{k=1}^{n}p_{k}x_{k}}}{1-q}\right)^{\sum_{k=1}^{n}p_{k}x_{k}+1/2}}{\prod_{k=1}^{n}\left(\frac{1-q^{x_{k}}}{1-q}\right)^{p_{k}(x_{k}+1/2)}}\exp\left(\frac{\Li_{2}\left(1-q^{\sum_{k=1}^{n}p_{k}x_{k}}\right)-\sum_{k=1}^{n}p_{k}\Li_{2}(1-q^{x_{k}})}{\log q}\right)$$
 \end{coro}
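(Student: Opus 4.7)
The plan is to obtain the two sides of the double inequality as two separate applications of Jensen's inequality, each anchored in one of the earlier logarithmic complete monotonicity results. A logarithmically completely monotonic function is, in particular, logarithmically convex, so whenever the weights $p_k \geq 0$ sum to $1$ we have
\begin{equation*}
\log f\Bigl(\sum_{k=1}^{n} p_k x_k;q\Bigr) \leq \sum_{k=1}^{n} p_k \log f(x_k;q).
\end{equation*}
This will be applied with two specific choices of the parameter $\alpha$.

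For the upper bound, I would take $\alpha = 1/2$ and $\beta = 1$, so that $2\alpha \leq 1$ and Theorem \ref{t1} applies: $f_{1/2,1}(x;q)$ is logarithmically completely monotonic on $(0,\infty)$. Applying the Jensen inequality above to $f_{1/2,1}$ and then exponentiating rearranges, exactly as in the proof of the previous corollary, into
\begin{equation*}
\frac{\Gamma_q\bigl(\sum_{k=1}^{n} p_k x_k + 1\bigr)}{\prod_{k=1}^{n}\bigl[\Gamma_q(x_k+1)\bigr]^{p_k}} \leq \frac{\bigl(\frac{1-q^{\sum p_k x_k}}{1-q}\bigr)^{\sum p_k x_k + 1/2}}{\prod_{k=1}^{n}\bigl(\frac{1-q^{x_k}}{1-q}\bigr)^{p_k(x_k+1/2)}} \exp\!\left(\frac{\Li_2(1-q^{\sum p_k x_k}) - \sum p_k \Li_2(1-q^{x_k})}{\log q}\right),
\end{equation*}
which is precisely the claimed upper estimate.

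For the lower bound, I would invoke Theorem \ref{t2} with $\alpha = 1$, which guarantees that the reciprocal $[f_{1,1}(x;q)]^{-1}$ is logarithmically completely monotonic, hence logarithmically convex. Writing out Jensen's inequality for $\log [f_{1,1}]^{-1}$ at the point $\sum p_k x_k$ and exponentiating gives
\begin{equation*}
\frac{\bigl(\frac{1-q^{\sum p_k x_k}}{1-q}\bigr)^{\sum p_k x_k}}{\Gamma_q\bigl(\sum p_k x_k + 1\bigr)} \exp\!\left(\frac{\Li_2(1-q^{\sum p_k x_k})}{\log q}\right) \leq \prod_{k=1}^{n}\left[\frac{\bigl(\frac{1-q^{x_k}}{1-q}\bigr)^{x_k}}{\Gamma_q(x_k+1)}\exp\!\left(\frac{\Li_2(1-q^{x_k})}{\log q}\right)\right]^{p_k},
\end{equation*}
and a clean rearrangement (multiplying both sides by the appropriate ratio of $\Gamma_q$'s and exponentials) yields the lower half of (\ref{222}).

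Since both bounds follow almost mechanically once the correct specialization of $\alpha$ is chosen, there is no real obstacle to overcome; the only care needed is bookkeeping of the exponents of $\bigl(\frac{1-q^{x}}{1-q}\bigr)$ and the $\Li_2$ terms when rearranging, and making sure the inequality direction is preserved when passing from $[f_{1,1}]^{-1}$ back to $f_{1,1}$ for the lower bound.
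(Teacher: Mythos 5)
Your proposal is correct and follows essentially the same route as the paper: the upper bound comes from Jensen's inequality applied to the logarithmically convex function $f_{1/2,1}$ (the paper cites its previous corollary, i.e.\ inequality (\ref{111}) with $\alpha=1/2$, $\beta=1$, which amounts to the same specialization), and the lower bound comes from the logarithmic concavity of $f_{1,1}$ guaranteed by Theorem \ref{t2}. The bookkeeping of exponents and $\Li_2$ terms you describe matches the stated inequality, so there is nothing to add.
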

 \begin{proof} The right side inequality of (\ref{222}) follows by inequality (\ref{111}). From Theorem \ref{t2}, the function $f_{1,1}(x;q)$ is logarithmically concave. Combining this fact with Jensen's inequality for convex functions we obtain the left side inequality of (\ref{222}).
 \end{proof}
\begin{coro} Let $q\in(0,1)$ and $a,b$ be a reals numbers such that $0<a<b$. Then the following inequalities
\begin{equation}\label{sss}
\frac{\left(\frac{1-q^{b}}{1-q}\right)^{b-1}}{\left(\frac{1-q^{a}}{1-q}\right)^{a-1}}\exp\left(\frac{\Li_{2}(1-q^{b})-Li_{2}(1-q^{a})}{\log q}\right)
\leq\frac{\Gamma_{q}(b)}{\Gamma_{q}(a)}\leq\frac{\left(\frac{1-q^{b}}{1-q}\right)^{b-\frac{1}{2}}}{\left(\frac{1-q^{a}}{1-q}\right)^{a-\frac{1}{2}}}\exp\left(\frac{\Li_{2}(1-q^{b})-\Li_{2}(1-q^{a})}{\log q}\right)
 \end{equation}
 holds.
 \end{coro}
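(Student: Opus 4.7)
The strategy is to read off the two bounds directly from the monotonicity of $f_{1/2,1}(\cdot;q)$ and $f_{1,1}(\cdot;q)$ that Theorem \ref{t1} and Theorem \ref{t2} already guarantee, so the corollary becomes a one-line consequence of each of those results applied at the endpoints $a$ and $b$.

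First I would take $\alpha=\tfrac12$ in Theorem \ref{t1}. The hypothesis $2\alpha\leq 1$ is saturated, so $f_{1/2,1}(\cdot;q)$ is logarithmically completely monotonic on $(0,\infty)$; in particular the $n=1$ instance of the definition gives $\bigl(\log f_{1/2,1}(\cdot;q)\bigr)'\leq 0$, so the function itself is non-increasing. For $0<a<b$ this yields
\[
f_{1/2,1}(b;q)\leq f_{1/2,1}(a;q).
\]
Symmetrically, Theorem \ref{t2} applied with $\alpha=1$ says $[f_{1,1}(\cdot;q)]^{-1}$ is logarithmically completely monotonic, hence non-increasing, so $f_{1,1}(\cdot;q)$ is non-decreasing and
\[
f_{1,1}(a;q)\leq f_{1,1}(b;q).
\]

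Next I would use the $q$-functional equation $\Gamma_q(x+1)=\frac{1-q^x}{1-q}\,\Gamma_q(x)$ to simplify the $\Gamma_q(x+1)$ factor in the definition of $f_{\alpha,1}$, producing
\[
f_{1/2,1}(x;q)=\frac{\Gamma_q(x)\exp\!\left(-\tfrac{\Li_2(1-q^x)}{\log q}\right)}{\left(\tfrac{1-q^x}{1-q}\right)^{x-1/2}},\qquad f_{1,1}(x;q)=\frac{\Gamma_q(x)\exp\!\left(-\tfrac{\Li_2(1-q^x)}{\log q}\right)}{\left(\tfrac{1-q^x}{1-q}\right)^{x-1}}.
\]
Substituting these expressions into the two monotonicity inequalities above and rearranging, the $\Gamma_q$-factors pair up as $\Gamma_q(b)/\Gamma_q(a)$, the two $\exp(-\Li_2/\log q)$ factors combine into the common dilogarithm exponential appearing in (\ref{sss}), and the remaining powers of $(1-q^\cdot)/(1-q)$ deliver exactly the exponents $b-1,\,a-1$ on the lower side and $b-\tfrac12,\,a-\tfrac12$ on the upper side.

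This is pure bookkeeping once Theorems \ref{t1} and \ref{t2} are in hand, and I do not anticipate any real obstacle. The only point worth watching is the direction of each monotonicity --- $f_{1/2,1}$ decreases while $f_{1,1}$ increases --- which is precisely what makes the two endpoint inequalities land on opposite sides of the ratio $\Gamma_q(b)/\Gamma_q(a)$ and produce a genuine two-sided bound rather than a single inequality.
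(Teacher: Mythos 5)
Your proposal is correct and follows exactly the paper's own argument: the paper likewise deduces the two bounds from the monotonicity of $f_{1/2,1}(x;q)$ (decreasing, by Theorem \ref{t1}) and $f_{1,1}(x;q)$ (increasing, by Theorem \ref{t2}), combined with the recurrence $\Gamma_q(x+1)=\frac{1-q^x}{1-q}\Gamma_q(x)$. You have simply written out the rearrangement that the paper leaves implicit.
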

 \begin{proof} From the monotonicity of the functions $f_{\frac{1}{2},1}(q;x)$ and $[f_{1,1}(q;x)]^{-1}$ and the recurrence formula
 \begin{equation}
 \Gamma_{q}(x+1)=\frac{1-q^x}{1-q}\Gamma_q(x),
 \end{equation}
 we obtain the inequalities (\ref{sss}).
 \end{proof}
 \begin{coro}\label{c4} Let $q\in(0,1)$ the following inequalities
 \begin{equation}\label{kh22}
 \left(\frac{1-q^{x}}{1-q}\right)^{x}\exp\left(-\frac{\Li_{2}(1-q^{})}{\log q}\right)\exp\left(\frac{\Li_{2}(1-q^{x})}{\log q}\right)
 \leq\Gamma_q(x+1)
 \end{equation}
 $$\leq \left(\frac{1-q^{x}}{1-q}\right)^{x+1/2}\exp\left(-\frac{\Li_{2}(1-q^{})}{\log q}\right)\exp\left(\frac{\Li_{2}(1-q^{x})}{\log q}\right)$$
 holds for all $x\in[1,\infty).$
 \end{coro}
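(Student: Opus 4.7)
The plan is to derive both bounds as a direct consequence of the monotonicity that logarithmic complete monotonicity implies. Recall that if a positive function $g$ satisfies $(-1)^n (\log g)^{(n)} \geq 0$ for $n \geq 1$, then in particular $(\log g)' \leq 0$, so $g$ is decreasing. Applying this to Theorem \ref{t1} with $\alpha = 1/2$, the function $f_{1/2,1}(x;q)$ is decreasing on $(0,\infty)$. Applying it to Theorem \ref{t2} with $\alpha = 1$, the function $[f_{1,1}(x;q)]^{-1}$ is decreasing on $(0,\infty)$, which means $f_{1,1}(x;q)$ itself is increasing on $(0,\infty)$.

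Next I would pin down the base-point values at $x = 1$. Using the $q$-factorial recurrence $\Gamma_q(x+1) = \frac{1-q^x}{1-q}\Gamma_q(x)$ together with $\Gamma_q(1) = 1$, one has $\Gamma_q(2) = 1$ and the denominator factor $\left(\frac{1-q}{1-q}\right)^{1+1-\alpha} = 1$ for any $\alpha$. Consequently both $f_{1/2,1}(1;q)$ and $f_{1,1}(1;q)$ collapse to the single value $\exp\!\bigl(-\Li_2(1-q)/\log q\bigr)$.

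With these values in hand, the bounds drop out by evaluating the monotonicity on $[1,\infty)$. For $x \geq 1$, the inequality $f_{1/2,1}(x;q) \leq f_{1/2,1}(1;q)$ rearranges, upon multiplying by $\left(\frac{1-q^x}{1-q}\right)^{x+1/2}\exp(\Li_2(1-q^x)/\log q)$, to the stated upper bound on $\Gamma_q(x+1)$. Similarly, $f_{1,1}(x;q) \geq f_{1,1}(1;q)$ rearranges, after multiplying by $\left(\frac{1-q^x}{1-q}\right)^{x}\exp(\Li_2(1-q^x)/\log q)$, to the lower bound. One could equivalently deduce the result by specializing Corollary 3 to $a = 1$, $b = x$ and then invoking the recurrence, but the direct monotonicity route is more transparent.

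There is essentially no real obstacle here; the only thing to be careful about is the bookkeeping of exponents. The critical observation that makes the argument clean is that at $x = 1$ the factor $\left(\frac{1-q^x}{1-q}\right)^{x+1-\alpha}$ equals $1$ regardless of $\alpha$, so that the lower and upper endpoints share the same common multiplicative constant $\exp\!\bigl(-\Li_2(1-q)/\log q\bigr)$ in the final inequalities; I would double-check the exponent arithmetic in the displayed bounds to make sure the $x$ versus $x+1/2$ in the power of $(1-q^x)/(1-q)$ matches the choice $\alpha = 1$ for the lower bound and $\alpha = 1/2$ for the upper bound.
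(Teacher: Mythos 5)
Your proposal is correct and follows exactly the paper's own route: deduce from Theorems \ref{t1} and \ref{t2} that $f_{1/2,1}(x;q)$ is decreasing and $f_{1,1}(x;q)$ is increasing, compare with the common value $f_{\alpha,1}(1;q)=\exp\bigl(-\Li_2(1-q)/\log q\bigr)$ at $x=1$, and rearrange. The only difference is that you make the base-point evaluation explicit, which the paper leaves implicit.
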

 \begin{proof} As the function $f_{1/2,1}(q;x)$ is logarithmically completely monotonic, $f_{1/2,1}(q;x)$ is also decreasing. The following inequality hold true for every $x\geq1:$
 \begin{equation}\label{kh97}
f_{1/2,1}(q;x)\leq f_{1/2,1}(q;1).
\end{equation}
In addition, as $1/f_{1,1}(q;x)$ is logarithmically completely monotonic, we deduce that $f_{1,1}(q;x)$ is increasing. The following inequality hold true for all $x\geq1:$
\begin{equation}\label{kh98}
 f_{1,1}(q;1)\leq f_{1,1}(q;x).
 \end{equation}
Combining inequalities(\ref{kh97}) and (\ref{kh98}) we obtain the inequalities (\ref{kh22}).   
 \end{proof}
 
 In the next Corollary we present new estimates for Stirling's formula remainder $r_n$.
 
 \begin{coro}The following inequalities hold true for every integer $n\geq1:$
 \begin{equation}\label{rr}
 e.\left(\frac{n}{e}\right)^n\leq n!\leq e.\sqrt{n}\left(\frac{n}{e}\right)^n,
 \end{equation}
 and 
 \begin{equation}\label{rr4}
 1-\frac{\log 2\pi n}{2}\leq r_n\leq 1-\frac{\log 2\pi }{2}
 \end{equation}
  In each of the above inequalities equality hold if and only if $n=1.$
 \end{coro}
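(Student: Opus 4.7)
The plan is to deduce both inequalities as a classical limit of Corollary \ref{c4} by letting $q\to 1^-$ and then specializing to $x=n$.

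First, I would record the four limits that are needed. As $q\to 1^-$ one has $\Gamma_q(x+1)\to\Gamma(x+1)$, $(1-q^x)/(1-q)\to x$, and the limits $\lim_{q\to 1^-}\Li_2(1-q^x)/\log q=-x$ already noted in the paper (applied with $x$ and with $x=1$). Substituting these into the left-hand side of (\ref{kh22}) yields
\begin{equation*}
x^{x}\exp(1)\exp(-x)=e\left(\frac{x}{e}\right)^{x},
\end{equation*}
while the right-hand side tends to $x^{x+1/2}\exp(1)\exp(-x)=e\sqrt{x}\,(x/e)^{x}$. So after passing to the limit $q\to 1^-$ in (\ref{kh22}) I obtain
\begin{equation*}
e\left(\frac{x}{e}\right)^{x}\le \Gamma(x+1)\le e\sqrt{x}\left(\frac{x}{e}\right)^{x},\qquad x\ge 1.
\end{equation*}
Specializing at $x=n\in\mathbb{N}$ and using $\Gamma(n+1)=n!$ gives (\ref{rr}).

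Next I would translate (\ref{rr}) into the statement on $r_n$. Taking logarithms of $n!=\sqrt{2\pi n}(n/e)^n e^{r_n}$ yields
\begin{equation*}
r_n=\log n!-\tfrac{1}{2}\log(2\pi n)-n\log n+n.
\end{equation*}
Plugging the lower bound $\log n!\ge 1+n\log n-n$ produces $r_n\ge 1-\tfrac12\log(2\pi n)$, and plugging the upper bound $\log n!\le 1+\tfrac12\log n+n\log n-n$ produces $r_n\le 1+\tfrac12\log n-\tfrac12\log(2\pi n)=1-\tfrac12\log(2\pi)$, which is exactly (\ref{rr4}).

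The equality cases are immediate: at $n=1$ both sides of (\ref{rr}) equal $1=1!$, and correspondingly both bounds in (\ref{rr4}) coincide with $r_1=1-\tfrac12\log(2\pi)$; for $n\ge 2$ the inequalities in (\ref{kh22}) are strict because the logarithmic complete monotonicity of $f_{1/2,1}$ and of $[f_{1,1}]^{-1}$ gives strict monotonicity on $(1,\infty)$. The only real technical point is justifying the interchange of limit and the inequality, which is trivial since the bounds in (\ref{kh22}) are continuous functions of $q$ on $(0,1)$ and the inequality is preserved in the limit; so there is no serious obstacle, only the bookkeeping of the four limits above.
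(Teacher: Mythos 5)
Your proposal is correct and follows essentially the same route as the paper, which simply says to let $q\to 1$ in Corollary \ref{c4} with $x=n$ and then perform ``standard computations''; you have just made the four limits and the logarithmic bookkeeping explicit, and your arithmetic checks out. The one small caveat is that passing to the limit $q\to 1^-$ only preserves the inequalities in non-strict form, so your claim of strictness for $n\ge 2$ would need an argument carried out at $q=1$ itself (e.g.\ strict monotonicity of the classical limiting functions) rather than being inherited from the strict monotonicity of $f_{1/2,1}(q;\cdot)$ for fixed $q<1$; the paper glosses over this point as well.
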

 \begin{proof} Replacing $x$ by $n\geq1$ and letting $q\longrightarrow1$ in (\ref{kh22}) we get (\ref{rr}), and after some standard computations we obtain inequalities (\ref{rr4}).
 \end{proof}
 \section{\textbf{Concluding Remarks}}  
 \noindent1. It is worth mentioning that the inequality (\ref{sss}) when letting $q$ tends to $1$, returns to the inequalities \cite{qi}
 \begin{equation}
 \frac{b^{b-1}}{a^{a-1}}e^{a-b}<\frac{\Gamma(a)}{\Gamma(b)}<\frac{b^{b-1/2}}{a^{a-1/2}}e^{a-b},
 \end{equation}
 for $b>a>0.$\\
 2. Let $n=2$ and $p_k=1/2,\;k=1,2$ in inequalities (\ref{222}) we obtain the lower bounds for the q-analogue for Gurland's ratio \cite{M7} as follows
 \begin{equation}\label{kh7}
 \begin{split} \frac{\left(\frac{1-q^{\frac{x+y}{2}}}{1-q}\right)^{x+y}}{\left(\frac{1-q^{x}}{1-q}\right)^{x}\left(\frac{1-q^{y}}{1-q}\right)^{y}}&\leq\frac{\Gamma_{q}^{2}\left(\frac{x+y+2}{2}\right)}{\Gamma_{q}(x+1)\Gamma_{q}(y+1)}\exp\left(\frac{\Li_{2}(1-q^{x})+\Li_{2}(1-q^{y})-2\Li_{2}(1-q^{x+y/2})}{\log q}\right)\\
 &\leq \frac{\left(\frac{1-q^{\frac{x+y}{2}}}{1-q}\right)^{x+y+1}}{\left(\frac{1-q^{x}}{1-q}\right)^{x+1/2}\left(\frac{1-q^{y}}{1-q}\right)^{y+1/2}}
 \end{split}
 \end{equation}
 where $x,y\in(0,\infty).$  In particular, let $q$ tends to $1$ in (\ref{kh7}) we get
 \begin{equation}\label{sana}
 \frac{\left(\frac{x+y}{2}\right)^{x+y}}{x^x y^y}\leq\frac{\Gamma^2\left(\frac{x+y+2}{2}\right)}{\Gamma(x+1)\Gamma(y+1)}\leq\frac{\left(\frac{x+y}{2}\right)^{x+y+1}}{x^{x+1/2} y^{y+1/2}}.
 \end{equation}
The left hand side inequalities of (\ref{sana}) has proved first by  Mortici \cite{MO} and the right hand side of inequalities (\ref{sana}) is new. 
 %4. Let the sequence $r_n$ defined by 
 %$$n!=\sqrt{2\pi n}(n/e)^n e^{r_n},$$
 %such that $(r_n)_n$ convergent to zero. From the inequalities (\ref{kh5}), we have $r_1=0.$ In addition, we have 
 %\begin{equation}
 %r_n-r_{n+1}=(n+1/2)\log\left(1+1/n\right)-1
 %\end{equation}
%Thus, 
%$$\lim_{n\rightarrow\infty} n^2(r_n-r_{n+1})=-1/4.$$
%More precisely, if there exists the limit $\lim_{n\rightarrow\infty} n^k(r_n-r_{n+1})=l\in\mathbb{R},$ with $k>1,$ then $\lim_{n\rightarrow\infty} %n^{k-1}r_n=l/k-1,$ this result was used by Motrici \cite{m1,m2} for constructing asymptotic expansions, or accelerating some convergences. So
%$$r_{n}\sim-1/4n.$$

\end{document}